\newtheorem{theorem}{Theorem}[section]
\newtheorem{remark}{Remark}[section]
\newtheorem{corollary}{Corollary}[section]
\newtheorem{proposition}{Proposition}[section]
\numberwithin{equation}{section}
\begin{document}
\title{On further refinements for Young inequalities}
\author{Shigeru Furuichi and Hamid Reza Moradi}
\subjclass[2010]{Primary 47A63, Secondary 26D07,  47A60.}
\keywords{Operator inequality, Young inequality, arithmetic--geometric mean inequality, positive operator.} \maketitle
\begin{abstract}
In this paper, sharp results on operator Young's inequality are obtained. We first obtain sharp multiplicative refinements and reverses for the operator Young's inequality. Secondly, we give an additive result, which improves a well-known inequality due to Tominaga. We also provide some estimates for $A{{\sharp}_{v}}B-A{{\nabla }_{v}}B$ in which $v\notin \left[ 0,1 \right]$.
\end{abstract}
\pagestyle{myheadings}
\markboth{\centerline {S. Furuichi \& H.R. Moradi}}
{\centerline {On inequalities of Young type}}
\bigskip
\bigskip
\section{Introduction}
This note lies in the scope of operator inequalities. We assume that the reader is familiar with the continuous functional calculus and Kubo-Ando theory \cite{kubo}.

It is to be understood throughout the paper that the capital letters present bounded
linear operators acting on a Hilbert space $\mathcal{H}$. $A$ is positive (written $A\ge 0$) in case
$\left\langle Ax,x \right\rangle \ge 0$ for all $x\in \mathcal{H}$ also an operator $A$ is said to be strictly positive(denoted by $A>0$) if $A$ is positive and invertible. If $A$ and $B$ are self-adjoint, we write $B\ge A$ in case $B-A\ge 0$. As usual, by $I$
we denote the identity operator.

The weighted arithmetic mean ${{\nabla }_{v}}$, geometric mean ${{\sharp}_{v}}$, and harmonic mean $!_v$, for $v\in \left[ 0,1 \right]$ and $a,b>0$, are defined as follows:
\[a{{\nabla }_{v}}b=\left( 1-v \right)a+vb,\quad a{{\sharp}_{v}}b={{a}^{1-v}}{{b}^{v}},\quad a!_vb =\left\{(1-v)a^{-1}+vb^{-1}\right\}^{-1}.\] 
If $v=\frac{1}{2}$,  we denote the arithmetic, geometric, and harmonic means, respectively, by $\nabla $, $\sharp$ and $!$, for the simplicity. Like the scalar cases, the operator arithmetic mean, the operator geometric mean, and the operator harmonic mean for $A,B>0$ are defined as follows:
\[A{{\nabla }_{v}}B=\left( 1-v \right)A+vB, \quad A{{\sharp}_{v}}B={{A}^{\frac{1}{2}}}{{\left( {{A}^{-\frac{1}{2}}}B{{A}^{-\frac{1}{2}}} \right)}^{v}}{{A}^{\frac{1}{2}}},\quad A!_vB=\left\{(1-v)A^{-1}+vB^{-1}\right\}^{-1}.\]
The celebrated arithmetic-geometric-harmonic-mean inequalities for scalars assert that if $a,b>0$, then
\begin{equation}\label{8}
a!_v b \leq a{{\sharp}_{v}}b\le a{{\nabla }_{v}}b.
\end{equation}
Generalization of the inequalities \eqref{8}  to operators can be seen as follows: If $A,B>0$, then
\begin{equation*}
A!_vB \leq A{{\sharp}_{v}}B\le A{{\nabla }_{v}}B.
\end{equation*}
The last inequality above is called the operator Young's inequality. During the past years, several refinements and reverses were given for Young's inequality, see for example \cite{5,6,liao}. 

Zuo et al.  showed in \cite[Theorem 7]{1} that the following inequality holds:
\begin{equation}\label{4}
K{{\left( h,2 \right)}^{r}}A{{\sharp}_{v}}B\le A{{\nabla }_{v}}B,\quad\text{ }r=\min \left\{ v,1-v \right\},\text{ }K\left( h,2 \right)=\frac{{{\left( h+1 \right)}^{2}}}{4h},\text{ }h=\frac{M}{m}
\end{equation}
whenever $0<m'I\le B\le mI< MI\le A\le M'I$ or $0<m'I\le A\le mI< MI\le B\le M'I$.
As the authors mentioned in \cite{1}, the inequality \eqref{4} improves the following refinement of Young's inequality involving Specht's ratio $S\left( t \right)=\frac{{{t}^{\frac{1}{t-1}}}}{e\log {{t}^{\frac{1}{t-1}}}}\left( t>0,t\ne 1 \right)$ (see \cite[Theorem 2]{2}), 
\[S\left( {{h}^{r}} \right)A{{\sharp}_{v}}B\le A{{\nabla }_{v}}B.\] 
Another improvement of Young's inequality,  is shown in \cite[Corollary 1]{dragomir}:
\[A{{\nabla }_{v}}B\le \exp \left[ \frac{v\left( 1-v \right)}{2}{{\left( h-1 \right)}^{2}} \right]A{{\sharp}_{v}}B.\]
We remark that there is no relationship between the constants $K{{\left( h,2 \right)}^{r}}$ and $\exp \left[ \frac{v\left( 1-v \right)}{2}{{\left( h-1 \right)}^{2}} \right]$ in general.

In \cite{3,6} we proved some sharp multiplicative reverses of Young's inequality. In this brief note, as the continuation of our previous works, we establish sharp bounds for the arithmetic, geometric and harmonic mean inequalities. Moreover, we shall show some additive-type refinements and reverses of Young's inequality. We will formulate our new results in a more general setting, namely the sandwich assumption $sA\leq B \leq tA$ ($0<s\le t$). Additionally, we provide some estimates for $A{{\sharp}_{v}}B-A{{\nabla }_{v}}B$ in which $v\notin \left[ 0,1 \right]$.
\section{Main Results}
In our previous work \cite{3}, we gave new sharp inequalities for reverse Young inequalities. In this section, we firstly give new sharp inequalities for Young inequalities, as limited cases in the first inequalities both (i) and (ii) of the following theorem. 
\begin{theorem}\label{1}
	Let $A,B>0$ such that $sA\leq B \leq tA$ for some scalars $0<s \leq t$ and let $f_v(x)\equiv \frac{(1-v)+vx}{x^v}$ for $x>0$, and $v \in [0,1]$.
	\begin{itemize}
		\item[(i)] If $t \leq 1$, then $f_v(t) A\sharp_vB \leq A\nabla_v B \leq f_v(s) A\sharp_vB$.
		\item[(ii)] If $s \geq 1$, then $f_v(s) A\sharp_vB \leq A\nabla_v B \leq f_v(t)A\sharp_vB$.
	\end{itemize}
\end{theorem}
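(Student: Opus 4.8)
The plan is to reduce the operator inequalities in (i) and (ii) to a scalar statement about the one‑variable function $f_v$, via the usual congruence trick, and then to settle that scalar statement by a single derivative computation.

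First I would dispose of the endpoint cases $v=0$ and $v=1$: here $f_v\equiv 1$ and $A\nabla_vB=A\sharp_vB$, so all four inequalities are equalities; hence assume $v\in(0,1)$. Next, set $X:=A^{-1/2}BA^{-1/2}$. Since $A>0$ the hypothesis $sA\le B\le tA$ is equivalent to $sI\le X\le tI$, so $\sigma(X)\subseteq[s,t]$; moreover $A\sharp_vB=A^{1/2}X^vA^{1/2}$ and $A\nabla_vB=A^{1/2}\bigl((1-v)I+vX\bigr)A^{1/2}$, and congruence by the invertible operator $A^{1/2}$ preserves the operator order. Therefore, for instance, $f_v(t)\,A\sharp_vB\le A\nabla_vB$ is equivalent to $f_v(t)X^v\le(1-v)I+vX$, which by the continuous functional calculus holds provided $f_v(t)x^v\le(1-v)+vx$ for every $x\in[s,t]$, i.e. $f_v(t)\le f_v(x)$ on $[s,t]$. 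In the same way, the pair of inequalities in (i) reduces to showing $f_v(t)\le f_v(x)\le f_v(s)$ for all $x\in[s,t]$ when $t\le1$, and the pair in (ii) reduces to $f_v(s)\le f_v(x)\le f_v(t)$ for all $x\in[s,t]$ when $s\ge1$.

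Then I would study $f_v$ on $(0,\infty)$. A direct computation gives
\[
f_v'(x)=\frac{v(1-v)(x-1)}{x^{v+1}},
\]
so, since $v(1-v)>0$, $f_v$ is strictly decreasing on $(0,1]$ and strictly increasing on $[1,\infty)$ with $f_v(1)=1$ its global minimum. Hence if $t\le1$ then $[s,t]\subseteq(0,1]$ and $f_v$ is decreasing there, giving $f_v(t)\le f_v(x)\le f_v(s)$ on $[s,t]$, which is (i); and if $s\ge1$ then $[s,t]\subseteq[1,\infty)$ and $f_v$ is increasing there, giving $f_v(s)\le f_v(x)\le f_v(t)$ on $[s,t]$, which is (ii).

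The computation is entirely routine, so there is no serious obstacle; the two points that need care are the equivalence ``operator inequality for $X$ $\Longleftrightarrow$ scalar inequality on $\sigma(X)\subseteq[s,t]$'' (which rests on congruence–invariance of the order together with the functional calculus) and the correct sign of $f_v'$, which is what makes the endpoint $t$ (resp.\ $s$) the relevant extremum in case (i) (resp.\ (ii)). I would also record that the constants cannot be improved: choosing $B=tA$ forces $X=tI$ and hence $A\nabla_vB=f_v(t)\,A\sharp_vB$, while $B=sA$ forces $A\nabla_vB=f_v(s)\,A\sharp_vB$, so each bounding constant is attained.
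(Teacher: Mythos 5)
Your proposal is correct and follows essentially the same route as the paper: compute $f_v'(x)=v(1-v)(x-1)x^{-v-1}$, deduce that $f_v$ decreases on $(0,1]$ and increases on $[1,\infty)$, and transfer the resulting scalar bounds $f_v(t)\le f_v(x)\le f_v(s)$ (resp.\ $f_v(s)\le f_v(x)\le f_v(t)$) to the operator setting via the functional calculus applied to $A^{-1/2}BA^{-1/2}$. You merely spell out the congruence step and the sharpness examples more explicitly than the paper does (the latter appearing there as a separate remark).
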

\begin{proof}
	Since $f'_v(x)=v(1-v)(x-1)x^{-v-1}$, $f_v(x)$ is monotone decreasing for $0<x \leq 1$ and monotone increasing for $x \geq 1$.
	
	(i) For the case $0<s \leq x \leq t \leq 1$, we have $f_v(t) \leq f_v(x)\leq f_v(s)$, which implies $f_v(t) A\sharp_vB \leq A\nabla_v B \leq f_v(s) A\sharp_vB $ by the standard functional calculus.
	
	(ii) For the case $1\leq s \leq x \leq t$, we have $f_v(s) \leq f_v(x)\leq f_v(t)$
	which implies $f_v(s) A\sharp_vB \leq A\nabla_v B \leq f_v(t)A\sharp_vB$ by the standard functional calculus.
\end{proof}
\begin{remark}
It is worth emphasizing that each assertion in Theorem \ref{1}, implies the other one. For instance, assume that the assertion (ii) holds, i.e.,
\begin{equation}\label{9}
{{f}_{v}}\left( s \right)\le {{f}_{v}}\left( x \right)\le {{f}_{v}}\left( t \right),\quad\text{ }1\le s\le x\le t.
\end{equation}
Let  $t\le 1$, then $1\le \frac{1}{t}\le \frac{1}{x}\le \frac{1}{s}$. Hence \eqref{9} ensures that
\[{{f}_{v}}\left( \frac{1}{t} \right)\le {{f}_{v}}\left( \frac{1}{x} \right)\le {{f}_{v}}\left( \frac{1}{s} \right).\] 
So 
\[\frac{\left( 1-v \right)t+v}{{{t}^{1-v}}}\le \frac{\left( 1-v \right)x+v}{{{x}^{1-v}}}\le \frac{\left( 1-v \right)s+v}{{{s}^{1-v}}}.\] 
Now, by replacing $v$ by $1-v$ we get
\[\frac{\left( 1-v \right)+vt}{{{t}^{v}}}\le \frac{\left( 1-v \right)+vx}{{{x}^{v}}}\le \frac{\left( 1-v \right)+vs}{{{s}^{v}}}\] 
which means
\[{{f}_{v}}\left( t \right)\le {{f}_{v}}\left( x \right)\le {{f}_{v}}\left( s \right),\quad\text{ }0<s\le x\le t\le 1.\]
\end{remark}
\begin{corollary}\label{cor01}
Let $A, B >0$, $m, m', M, M'>0$, and $v\in \left[ 0,1 \right]$.  
\begin{itemize}
	\item[(i)] If $0<m'I\le A\le mI< MI\le B\le M'I$, then 
	\begin{equation}\label{6}
	\frac{m\nabla_v M}{m\sharp_v M}  A\sharp_v B \leq A\nabla_v B\leq \frac{m'\nabla_v M'}{m'\sharp_v M'}A\sharp_v B.	
	\end{equation}
	
	\item[(ii)] If $0<m'I\le B\le mI< MI\le A\le M'I$, then
	\begin{equation}\label{5}
\frac{M\nabla_v m}{M\sharp_v m} A\sharp_v B \leq A\nabla_v B\leq \frac{M'\nabla_v m'}{M'\sharp_v m'}A\sharp_v B.	
	\end{equation}
\end{itemize}
\end{corollary}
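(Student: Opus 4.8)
The plan is to reduce Corollary \ref{cor01} directly to Theorem \ref{1} by manufacturing explicit scalars $s,t$ that realise the sandwich hypothesis $sA\le B\le tA$, and then identifying $f_v(s)$ and $f_v(t)$ with the displayed ratios of means.

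For part (i), under $0<m'I\le A\le mI< MI\le B\le M'I$ I would argue as follows. Multiplying $mI-A\ge 0$ by the positive scalar $\tfrac{M}{m}$ gives $\tfrac{M}{m}A\le MI\le B$, and multiplying $A-m'I\ge 0$ by $\tfrac{M'}{m'}$ gives $B\le M'I\le \tfrac{M'}{m'}A$; hence $sA\le B\le tA$ with $s=\tfrac{M}{m}$ and $t=\tfrac{M'}{m'}$. The chain $m'\le m<M\le M'$ shows $1<s\le t$, so we are in case (ii) of Theorem \ref{1}, which yields $f_v(s)\,A\sharp_vB\le A\nabla_vB\le f_v(t)\,A\sharp_vB$. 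A one-line computation gives $f_v\!\left(\tfrac{M}{m}\right)=\frac{(1-v)m+vM}{m^{1-v}M^{v}}=\frac{m\nabla_vM}{m\sharp_vM}$ and likewise $f_v\!\left(\tfrac{M'}{m'}\right)=\frac{m'\nabla_vM'}{m'\sharp_vM'}$, which is exactly \eqref{6}.

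For part (ii), under $0<m'I\le B\le mI< MI\le A\le M'I$ the roles of $A$ and $B$ are mirrored: from $A\ge MI$ and $B\le mI$ we obtain $B\le \tfrac{m}{M}A$, while from $A\le M'I$ and $B\ge m'I$ we obtain $\tfrac{m'}{M'}A\le B$; thus $sA\le B\le tA$ with $s=\tfrac{m'}{M'}$ and $t=\tfrac{m}{M}$, and now $s\le t<1$ (again by $m'\le m<M\le M'$). Hence Theorem \ref{1}(i) applies and gives $f_v(t)\,A\sharp_vB\le A\nabla_vB\le f_v(s)\,A\sharp_vB$. Rewriting $f_v\!\left(\tfrac{m}{M}\right)=\frac{(1-v)M+vm}{M^{1-v}m^{v}}=\frac{M\nabla_vm}{M\sharp_vm}$ and $f_v\!\left(\tfrac{m'}{M'}\right)=\frac{M'\nabla_vm'}{M'\sharp_vm'}$ produces \eqref{5}.

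I do not anticipate a genuine obstacle here. The only points requiring care are keeping the operator inequalities pointing the right way when passing from $A\le mI$ to $\tfrac{M}{m}A\le MI$ (a positive scalar multiple of a positive operator), and checking that the manufactured $s,t$ land in the correct branch of Theorem \ref{1} (namely $s\ge 1$ in case (i) of the corollary and $t\le 1$ in case (ii)). The identification of $f_v$ at the endpoints with the quotient (arithmetic mean)/(geometric mean) is purely algebraic.
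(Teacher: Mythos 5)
Your proof is correct and follows essentially the same route as the paper: both identify $s=\tfrac{M}{m}$, $t=\tfrac{M'}{m'}$ in case (i) and $s=\tfrac{m'}{M'}$, $t=\tfrac{m}{M}$ in case (ii) and invoke the corresponding branch of Theorem \ref{1} (the paper merely phrases the sandwich condition in the equivalent form $sI\le A^{-1/2}BA^{-1/2}\le tI$). The endpoint computations $f_v(M/m)=\tfrac{m\nabla_vM}{m\sharp_vM}$ etc.\ match the paper's conclusion exactly.
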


\begin{proof}
We use again the function $f_v(x)=\frac{(1-v)+vx}{x^v}$ in this proof.

The condition (i) is equivalent to $I\le \frac{M}{m}I\le {{A}^{-\frac{1}{2}}}B{{A}^{-\frac{1}{2}}}\le \frac{M'}{m'}I$, so that we get $f_v(\frac{M}{m}) A\sharp_v B \leq A\nabla_v B\leq f_v(\frac{M'}{m'})A\sharp_v B$ by putting $s=\frac{M}{m}$ and $t=\frac{M'}{m'}$ in (ii) of Theorem \ref{1}. 

Similarly, the condition (ii) is equivalent to $\frac{m'}{M'}I\le {{A}^{-\frac{1}{2}}}B{{A}^{-\frac{1}{2}}}\le \frac{m}{M}I\le I$, so that we get $f_v(\frac{m}{M}) A\sharp_v B \leq A\nabla_v B\leq f_v(\frac{m'}{M'})A\sharp_v B$ by putting $s=\frac{m'}{M'}$ and $t=\frac{m}{M}$ in (i) of Theorem \ref{1}.

\end{proof}

Note that the second inequalities in both (i) and (ii) of Theorem \ref{1} and Corollary \ref{cor01} are special cases in  Theorem A of our previous paper \cite{3}.

\begin{remark}
	It is remarkable that the inequalities $f_v(t) \leq f_v(x) \leq f_v(s)$ ($0<s\leq x\leq t \leq 1$) given in the proof of Theorem \ref{1} are sharp, since the function $f_v(x)$ for $s\leq x\leq t$ is continuous. So, all result given from Theorem \ref{1} are similarly sharp. As a matter of fact, let $A=MI$ and $B=mI$, then from LHS of \eqref{5}, we infer
\[A{{\nabla }_{v}}B=(M{{\nabla }_{v}}m)I\quad\text{ and }\quad A{{\sharp}_{v}}B=(M{{\sharp}_{v}}m)I.\]
Consequently,
\[\frac{M{{\nabla }_{v}}m}{M{{\sharp}_{v}}m}A{{\sharp}_{v}}B=A{{\nabla }_{v}}B.\]
To see that the constant $\frac{m{{\nabla }_{v}}M}{m{{\sharp}_{v}}M}$ in the LHS of \eqref{6} can not be improved, we consider
 $A=mI$ and $B=MI$, then
\[\frac{m{{\nabla }_{v}}M}{m{{\sharp}_{v}}M}A{{\sharp}_{v}}B=A{{\nabla }_{v}}B.\]
\end{remark}

By replacing $A$, $B$ by ${{A}^{-1}}$, ${{B}^{-1}}$, respectively, then the refinement and reverse of non-commutative geometric-harmonic mean inequality can be obtained as follows:
\begin{corollary}
Let $A, B >0$, $m, m', M, M'>0$, and $v\in \left[ 0,1 \right]$.  
\begin{itemize}
	\item[(i)] If $0<m'I\le A\le mI< MI\le B\le M'I$, then 
	\[\frac{m'{{!}_{v}}M'}{m'{{\sharp}_{v}}M'}A{{\sharp}_{v}}B\le A{{!}_{v}}B\le \frac{m{{!}_{v}}M}{m{{\sharp}_{v}}M}A{{\sharp}_{v}}B.\]
	
	\item[(ii)] If $0<m'I\le B\le mI< MI\le A\le M'I$, then
\[\frac{M'{{!}_{v}}m'}{M'{{\sharp}_{v}}m'}A{{\sharp}_{v}}B\le A{{!}_{v}}B\le \frac{M{{!}_{v}}m}{M{{\sharp}_{v}}m}A{{\sharp}_{v}}B.\]
\end{itemize}
\end{corollary}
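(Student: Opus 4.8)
The plan is to reduce the geometric--harmonic statement to the already-established arithmetic--geometric result in Corollary \ref{cor01} by the standard inversion trick announced just before the statement. First I would recall the elementary identities $A^{-1}\nabla_v B^{-1}=(A!_vB)^{-1}$ and $A^{-1}\sharp_v B^{-1}=(A\sharp_vB)^{-1}$, which are immediate from the definitions of the weighted means. These let me translate any inequality between $\nabla_v$ and $\sharp_v$ for the pair $(A^{-1},B^{-1})$ into an inequality between $!_v$ and $\sharp_v$ for $(A,B)$, after taking inverses; the only subtlety is that taking inverses reverses the order of positive operators, so the refinement inequality turns into a reverse inequality and vice versa, and this is precisely why the ratios that appear get swapped between the two sides.

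The key steps, in order: (1) Assume the hypothesis of part (i), $0<m'I\le A\le mI<MI\le B\le M'I$. Taking inverses gives $0<\tfrac1{M'}I\le B^{-1}\le\tfrac1M I<\tfrac1m I\le A^{-1}\le\tfrac1{m'}I$, which is exactly the hypothesis of part (ii) of Corollary \ref{cor01} applied to the pair $(A^{-1},B^{-1})$ with the substitutions $m\rightsquigarrow \tfrac1M$, $M\rightsquigarrow\tfrac1m$, $m'\rightsquigarrow\tfrac1{M'}$, $M'\rightsquigarrow\tfrac1{m'}$. (2) Write down the conclusion of Corollary \ref{cor01}(ii) for these operators and scalars: $\dfrac{\tfrac1M\nabla_v\tfrac1m}{\tfrac1M\sharp_v\tfrac1m}\,A^{-1}\sharp_v B^{-1}\le A^{-1}\nabla_v B^{-1}\le \dfrac{\tfrac1{M'}\nabla_v\tfrac1{m'}}{\tfrac1{M'}\sharp_v\tfrac1{m'}}\,A^{-1}\sharp_v B^{-1}$. (3) Simplify the scalar ratios: using $\tfrac1a\nabla_v\tfrac1b=\tfrac1{a!_vb}$ and $\tfrac1a\sharp_v\tfrac1b=\tfrac1{a\sharp_vb}$ one gets $\dfrac{\tfrac1M\nabla_v\tfrac1m}{\tfrac1M\sharp_v\tfrac1m}=\dfrac{M\sharp_vm}{M!_vm}$ and likewise $\dfrac{\tfrac1{M'}\nabla_v\tfrac1{m'}}{\tfrac1{M'}\sharp_v\tfrac1{m'}}=\dfrac{M'\sharp_vm'}{M'!_vm'}$. (4) Substitute the operator identities from step one; the chain becomes $\dfrac{M\sharp_vm}{M!_vm}(A\sharp_vB)^{-1}\le (A!_vB)^{-1}\le \dfrac{M'\sharp_vm'}{M'!_vm'}(A\sharp_vB)^{-1}$. (5) Take inverses of the whole chain of positive operators, which reverses both inequalities and the reciprocals of the scalar constants, yielding $\dfrac{M'!_vm'}{M'\sharp_vm'}A\sharp_vB\le A!_vB\le \dfrac{M!_vm}{M\sharp_vm}A\sharp_vB$, which is exactly the claimed part (ii) of the corollary. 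Wait --- I should double-check the labelling: the inversion sends hypothesis (i) to the arithmetic statement under hypothesis (ii), so part (i) of this corollary follows from Corollary \ref{cor01}(ii), and symmetrically part (ii) of this corollary follows from Corollary \ref{cor01}(i). Running the same five steps starting from the hypothesis of part (ii) gives the first displayed inequality of the statement.

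The only genuine obstacle is bookkeeping: one must be careful that each passage to inverses flips the order of the operator inequalities, that the scalar constants get reciprocated correspondingly, and that the roles of the four bounds $m,m',M,M'$ permute in the way dictated by $x\mapsto x^{-1}$ reversing order on $(0,\infty)$. No analytic input beyond Corollary \ref{cor01} is needed --- the scalar identities $\tfrac1a\nabla_v\tfrac1b=(a!_vb)^{-1}$, $\tfrac1a\sharp_v\tfrac1b=(a\sharp_vb)^{-1}$ and their operator analogues are all one-line verifications from the definitions in the Introduction. I would present the proof for part (i) in full and remark that part (ii) is entirely analogous, exchanging the roles of Corollary \ref{cor01}(i) and (ii).
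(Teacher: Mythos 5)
Your overall strategy is exactly the paper's: the corollary is obtained from Corollary \ref{cor01} by replacing $A$, $B$ with $A^{-1}$, $B^{-1}$ and using $A^{-1}\nabla_vB^{-1}=(A!_vB)^{-1}$, $A^{-1}\sharp_vB^{-1}=(A\sharp_vB)^{-1}$, then inverting the chain. However, your execution of step (2) contains a substitution error that propagates to a false conclusion. Under hypothesis (i), inversion gives $0<\tfrac{1}{M'}I\le B^{-1}\le\tfrac1MI<\tfrac1mI\le A^{-1}\le\tfrac1{m'}I$, so in Corollary \ref{cor01}(ii) the role of ``$M$'' is played by $\tfrac1m$ and the role of ``$m$'' by $\tfrac1M$ (as you yourself state). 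Hence the left-hand constant $\frac{M\nabla_vm}{M\sharp_vm}$ must become $\frac{\frac1m\nabla_v\frac1M}{\frac1m\sharp_v\frac1M}=\frac{m\sharp_vM}{m!_vM}$, not $\frac{\frac1M\nabla_v\frac1m}{\frac1M\sharp_v\frac1m}=\frac{M\sharp_vm}{M!_vm}$ as you wrote; the order of the arguments matters because $a!_vb\ne b!_va$ and $a\sharp_vb\ne b\sharp_va$ for $v\ne\tfrac12$. Carrying the correct constants through steps (4)--(5) yields
\[
\frac{m'!_vM'}{m'\sharp_vM'}\,A\sharp_vB\ \le\ A!_vB\ \le\ \frac{m!_vM}{m\sharp_vM}\,A\sharp_vB,
\]
which is part (i) of the corollary --- consistent with your own meta-observation that hypothesis (i) feeds into Corollary \ref{cor01}(ii).

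The chain you actually derived --- the part (ii) formula under the part (i) hypothesis --- is not merely mislabelled; it is false. Take $A=mI$, $B=MI$ (admissible under hypothesis (i) with $m'=m$, $M'=M$): then $A!_vB=(m!_vM)I$ and $A\sharp_vB=(m\sharp_vM)I$, while your claimed upper bound requires $\frac{m!_vM}{m\sharp_vM}\le\frac{M!_vm}{M\sharp_vm}$, which fails e.g.\ for $m=1$, $M=4$, $v=\tfrac14$ (the two sides are about $0.870$ and $0.808$). The concluding ``relabelling'' remark does not repair this, since the hypothesis determines which displayed inequality you obtain. The fix is purely mechanical: redo step (2) with the substitution applied consistently, and the rest of your argument (the scalar identities, the operator identities, and the order reversal under inversion) goes through verbatim.
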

Now, we give a new sharp reverse inequality for Young's inequality as an additive-type in the following.
\begin{theorem}\label{3}
Let $A,B>0$ such that $sA\leq B \leq tA$ for some scalars $0<s \leq t$, and $v\in \left[ 0,1 \right]$. Then  
\begin{equation}\label{7}
A{{\nabla }_{v}}B-A{{\sharp}_{v}}B\le \max \left\{ {{g}_{v}}\left( s \right),{{g}_{v}}\left( t \right) \right\}A
\end{equation}
where ${{g}_{v}}\left( x \right)\equiv \left( 1-v \right)+vx-{{x}^{v}}$ for $s \leq x \leq t$.
\end{theorem}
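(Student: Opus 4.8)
The plan is to reduce the operator inequality \eqref{7} to a scalar estimate via the functional calculus applied to the operator $X\equiv A^{-1/2}BA^{-1/2}$. First I would observe that the sandwich condition $sA\le B\le tA$ is equivalent to $sI\le X\le tI$, and that both sides of \eqref{7} can be written as $A^{1/2}(\,\cdot\,)A^{1/2}$ applied to functions of $X$: indeed $A\nabla_v B - A\sharp_v B = A^{1/2}\bigl((1-v)I + vX - X^v\bigr)A^{1/2} = A^{1/2} g_v(X) A^{1/2}$, while the right-hand side is $\max\{g_v(s),g_v(t)\}\,A^{1/2} I A^{1/2}$. So it suffices to prove the scalar bound $g_v(x)\le \max\{g_v(s),g_v(t)\}$ for all $x\in[s,t]$, and then conjugate by $A^{1/2}$, which preserves operator order.

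The core is therefore the one-variable claim: on the interval $[s,t]$ the continuous function $g_v(x)=(1-v)+vx-x^v$ attains its maximum at one of the endpoints. I would prove this by examining $g_v'(x) = v - v x^{v-1} = v\bigl(1 - x^{v-1}\bigr)$ for $v\in(0,1)$ (the cases $v=0$ and $v=1$ give $g_v\equiv 0$ and are trivial). Since $v-1<0$, the map $x\mapsto x^{v-1}$ is strictly decreasing, so $g_v'(x)<0$ for $x<1$ and $g_v'(x)>0$ for $x>1$; thus $g_v$ is strictly decreasing on $(0,1]$ and strictly increasing on $[1,\infty)$, with a global minimum at $x=1$ (where $g_v(1)=0$, consistent with the scalar Young inequality). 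A convex-looking function with a single interior minimum on $[s,t]$ necessarily has its maximum over $[s,t]$ at $s$ or at $t$, whichever gives the larger value — this is exactly $\max\{g_v(s),g_v(t)\}$. (Note that one could split into the three cases $t\le 1$, $s\ge 1$, and $s<1<t$, but the endpoint-maximum statement covers all of them uniformly, so no case analysis is really needed.)

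Then I would assemble the operator inequality: from $g_v(X)\le \max\{g_v(s),g_v(t)\}\,I$, which holds by the standard functional calculus since $sI\le X\le tI$, multiply on both sides by $A^{1/2}$ to get $A^{1/2}g_v(X)A^{1/2}\le \max\{g_v(s),g_v(t)\}\,A$, and identify the left side with $A\nabla_v B - A\sharp_v B$ using the definitions of $\nabla_v$ and $\sharp_v$. This yields \eqref{7}.

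I do not anticipate a serious obstacle here; the argument is short. The only point requiring a little care is the bookkeeping that turns $g_v(X)$ into $A\nabla_v B-A\sharp_v B$ after conjugation — specifically checking that $A^{1/2}X^v A^{1/2}=A\sharp_v B$ and $A^{1/2}\bigl((1-v)I+vX\bigr)A^{1/2}=(1-v)A+vB=A\nabla_v B$ — and noting that conjugation $Y\mapsto A^{1/2}YA^{1/2}$ is order-preserving on self-adjoint operators, which is standard. If one prefers to avoid even that, one can cite the remark after Theorem \ref{1} that reductions of this type follow by "the standard functional calculus," mirroring the proof of Theorem \ref{1} verbatim with $f_v$ replaced by $g_v$ and "$\le f_v(\cdot)\,A\sharp_v B$" replaced by "$\le g_v(\cdot)\,A$."
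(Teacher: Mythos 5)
Your proof is correct and follows essentially the same route as the paper: reduce to the scalar bound $g_v(x)\le\max\{g_v(s),g_v(t)\}$ on $[s,t]$ and transfer it to operators by the functional calculus and conjugation by $A^{1/2}$. The only (immaterial) difference is that you derive the endpoint-maximum property from the sign of $g_v'$ (decreasing on $(0,1]$, increasing on $[1,\infty)$), whereas the paper invokes the convexity $g_v''(x)=v(1-v)x^{v-2}\ge 0$ of the continuous function $g_v$ on $[s,t]$.
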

\begin{proof}
Straightforward differentiation shows that $g''_v(x)=v(1-v)x^{v-2} \geq 0$ and $g_v(x)$ is continuous on the interval $\left[ s,t \right]$, so 
\[{{g}_{v}}\left( x \right)\le \max \left\{ {{g}_{v}}\left( s \right),{{g}_{v}}\left( t \right) \right\}.\]
Therefore, by applying similar arguments as in the proof of Theorem \ref{1}, we reach the desired inequality \eqref{7}. This completes the proof of theorem.
\end{proof}

\begin{remark}
We claim that if $A,B>0$ such that $mI\le A,B\le MI$ for some scalars $0<m<M$ and $v\in \left[ 0,1 \right]$, then
\[A{{\nabla }_{v}}B-A{{\sharp}_{v}}B\le \max \left\{ {{g}_{v}}\left( h \right),{{g}_{v}}\left( \frac{1}{h} \right) \right\}A\le L\left( 1,h \right)\log S\left( h \right)A\]
holds, where $L\left( x,y \right)=\frac{y-x}{\log y-\log x}\left( x\neq y \right)$ is the logarithmic mean and the term $S\left( h \right)$ refers to the Specht's ratio. Indeed, we have the inequalities 
$$
(1-v)+vh -h^v \leq L(1,h)\log S(h), \quad (1-v)+v\frac{1}{h}-h^{-v} \leq  L(1,h)\log S(h), 
$$
which were originally proved in \cite[Lemma 3.2]{4}, thanks to $S\left( h \right)=S\left( \frac{1}{h} \right)$ and $L\left( 1,h \right)=L\left( 1,\frac{1}{h} \right)$.
 Therefore, our result, Theorem \ref{3}, improves the well-known result by Tominaga \cite[Theorem 3.1]{4},
\[A{{\nabla }_{v}}B-A{{\sharp}_{v}}B\le L\left( 1,h \right)\log S\left( h \right)A.\]
\end{remark}
\begin{corollary}
	Let $A,B>0$ such that $mI\le A,B\le MI$ for some scalars $0<m < M$. Then
\[A{{\nabla }_{v}}B-A{{\sharp}_{v}}B \le \xi A\]
where $\xi =\max \left\{ \frac{1}{M}\left( M{{\nabla }_{v}}m-M{{\sharp}_{v}}m \right),\frac{1}{m}\left( m{{\nabla }_{v}}M-m{{\sharp}_{v}}M \right) \right\}$.
\end{corollary}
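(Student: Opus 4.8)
The plan is to obtain this corollary as an immediate specialization of Theorem \ref{3}, by choosing the sandwich constants correctly and then carrying out a one-line homogeneity computation to match the resulting constant with $\xi$.

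First I would note that the two-sided operator bounds $mI\le A,B\le MI$ force a sandwich relation of the type required by Theorem \ref{3}. Indeed, from $A\le MI$ we get $\tfrac{m}{M}A\le mI$, hence $B\ge mI\ge \tfrac{m}{M}A$; and from $A\ge mI$ we get $\tfrac{M}{m}A\ge MI$, hence $B\le MI\le \tfrac{M}{m}A$. Thus $sA\le B\le tA$ holds with $s=\tfrac{m}{M}$ and $t=\tfrac{M}{m}$, and $0<s\le t$ since $m<M$. Applying Theorem \ref{3} with these $s,t$ gives
\[
A\nabla_v B-A\sharp_v B\le \max\bigl\{g_v(m/M),\,g_v(M/m)\bigr\}\,A,
\]
where $g_v(x)=(1-v)+vx-x^v$.

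It then remains to recognize the two quantities in this maximum. The key identity is the elementary homogeneity relation
\[
M\,g_v\!\left(\frac{m}{M}\right)=(1-v)M+vm-M^{1-v}m^v=M\nabla_v m-M\sharp_v m,
\]
so that $g_v(m/M)=\tfrac{1}{M}\bigl(M\nabla_v m-M\sharp_v m\bigr)$; interchanging the roles of $m$ and $M$ gives $g_v(M/m)=\tfrac{1}{m}\bigl(m\nabla_v M-m\sharp_v M\bigr)$ in exactly the same way. Hence $\max\{g_v(m/M),g_v(M/m)\}=\xi$ and the asserted inequality follows.

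I do not anticipate a genuine obstacle: the only points needing a modicum of care are checking that $mI\le A,B\le MI$ really yields the operator sandwich $\tfrac{m}{M}A\le B\le \tfrac{M}{m}A$ (rather than just a scalar comparison), and bookkeeping the exponent $1-v$ versus $v$ when clearing denominators in the identity $M g_v(m/M)=M\nabla_v m-M\sharp_v m$. Both are routine, so the corollary is essentially a restatement of Theorem \ref{3} in the ``balanced'' setting $mI\le A,B\le MI$.
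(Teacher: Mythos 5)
Your proposal is correct and follows exactly the route the paper intends: the bounds $mI\le A,B\le MI$ give the sandwich $\tfrac{m}{M}A\le B\le \tfrac{M}{m}A$, Theorem \ref{3} applies with $s=\tfrac{m}{M}$, $t=\tfrac{M}{m}$, and the homogeneity identities $g_v(m/M)=\tfrac{1}{M}(M\nabla_v m-M\sharp_v m)$ and $g_v(M/m)=\tfrac{1}{m}(m\nabla_v M-m\sharp_v M)$ identify the constant as $\xi$. This matches the paper's (unwritten but clearly intended) derivation, as also reflected in the remark preceding the corollary where the same maximum appears with $h=M/m$.
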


Since $g_v(x)$ is convex so that we can not obtain a general result on the lower bound for $A{{\nabla }_{v}}B-A{{\sharp}_{v}}B$.
However, if we impose the conditions, we can obtain new sharp inequalities for Young inequalities as an additive-type in the first inequalities both (i) and (ii) in the following proposition. (At the same time, of course, we also obtain the upper bounds straightforwardly.)

\begin{proposition}\label{2}
Let $A,B>0$ such that $sA\le B\le tA$ for some scalars $0<s\le t$, $v\in \left[ 0,1 \right]$,  and ${{g}_{v}}$ is defined as in Theorem \ref{3}.
\begin{itemize}
	\item[(i)] If $t\le 1$, then ${{g}_{v}}\left( t \right)A\le A{{\nabla }_{v}}B-A{{\sharp}_{v}}B \le g_v(s)A$.  
	\item[(ii)] If $s\ge 1$, then ${{g}_{v}}\left( s \right)A\le A{{\nabla }_{v}}B-A{{\sharp}_{v}}B \le g_v(t)A$.  
\end{itemize}
\end{proposition}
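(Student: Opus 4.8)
The plan is to mirror exactly the argument used for Theorem \ref{1}, but now applied to the function $g_v$ instead of $f_v$, exploiting the monotonicity behaviour that the convexity $g''_v(x)=v(1-v)x^{v-2}\geq 0$ forces on each side of the critical point. First I would compute the first derivative: $g'_v(x)=v-vx^{v-1}=v(1-x^{v-1})$, which vanishes at $x=1$, is negative for $0<x<1$ (since then $x^{v-1}>1$ for $v\in[0,1]$), and positive for $x>1$. Hence $g_v$ is monotone decreasing on $(0,1]$ and monotone increasing on $[1,\infty)$, with minimum value $g_v(1)=0$.

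Next I would split into the two cases. In case (i), $0<s\leq x\leq t\leq 1$ places the whole interval $[s,t]$ in the region where $g_v$ decreases, so $g_v(t)\leq g_v(x)\leq g_v(s)$ for all $x\in[s,t]$. In case (ii), $1\leq s\leq x\leq t$ places $[s,t]$ in the region where $g_v$ increases, so $g_v(s)\leq g_v(x)\leq g_v(t)$. These are scalar inequalities valid on the relevant intervals.

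The final step is the passage to operators via the functional calculus, identical in spirit to the proof of Theorem \ref{1}. The hypothesis $sA\leq B\leq tA$ is equivalent, after conjugating by $A^{-1/2}$, to $sI\leq A^{-1/2}BA^{-1/2}\leq tI$, so the spectrum of the positive operator $X:=A^{-1/2}BA^{-1/2}$ lies in $[s,t]$. Applying the scalar inequality for $g_v$ on $[s,t]$ to $X$ gives, in case (i), $g_v(t)I\leq g_v(X)\leq g_v(s)I$, i.e. $g_v(t)I\leq (1-v)I+vX-X^v\leq g_v(s)I$; conjugating back by $A^{1/2}$ and using $A^{1/2}X^vA^{1/2}=A\sharp_v B$ together with $A^{1/2}((1-v)I+vX)A^{1/2}=A\nabla_v B$ yields $g_v(t)A\leq A\nabla_v B-A\sharp_v B\leq g_v(s)A$. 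Case (ii) is handled the same way with the inequalities reversed.

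There is no real obstacle here: the argument is a routine convexity-plus-monotonicity observation followed by the standard conjugation trick already demonstrated twice in the excerpt. The only point requiring a moment's care is verifying the sign of $g'_v$ on $(0,1)$ versus $(1,\infty)$ for $v\in[0,1]$ — equivalently, that $x^{v-1}\geq 1$ iff $x\leq 1$ when $v-1\leq 0$ — but this is immediate. Thus the proposition follows at once by combining the monotonicity of $g_v$ on each half-line with the functional calculus, exactly as in the proof of Theorem \ref{1}.
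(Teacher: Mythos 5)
Your proposal is correct and follows exactly the paper's own (very brief) argument: the paper likewise deduces the result from the monotonicity of $g_v$ on $(0,1]$ and $[1,\infty)$ together with the same functional-calculus conjugation used in Theorem \ref{1}. You have merely written out the derivative computation and the passage to operators that the paper leaves implicit.
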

\begin{proof}
It follows from the fact that  ${{g}_{v}}\left( x \right)$ is monotone decreasing for $0<x\le 1$ and monotone increasing for $x\ge 1$. 
\end{proof}
\begin{corollary}
	Let $A, B >0$, $m, m', M, M'>0$, and $v\in \left[ 0,1 \right]$.  
	\begin{itemize}
		\item[(i)] If $0<m'I\le A\le mI< MI\le B\le M'I$, then
		 $$\frac{1}{m}\left( m{{\nabla }_{v}}M-m{{\sharp}_{v}}M \right)A\le A{{\nabla }_{v}}B-A{{\sharp}_{v}}B\leq \frac{1}{m'}\left( m'{{\nabla }_{v}}M'-m'{{\sharp}_{v}}M' \right)A.$$
		 
		\item[(ii)] If $0<m'I\le B\le mI< MI\le A\le M'I$,  then 
		 $$\frac{1}{M}\left( M{{\nabla }_{v}}m-M{{\sharp}_{v}}m \right)A\le A{{\nabla }_{v}}B-A{{\sharp}_{v}}B\leq \frac{1}{M'}\left( M'{{\nabla }_{v}}m'-M'{{\sharp}_{v}}m' \right)A.$$
	\end{itemize}
\end{corollary}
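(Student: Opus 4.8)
The plan is to deduce this corollary directly from Proposition \ref{2}: one reads off the correct sandwich parameters $s,t$ from the given operator bounds, applies the appropriate branch of Proposition \ref{2}, and then rewrites $g_v$ evaluated at those parameters.

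First, in case (i), the hypothesis $0<m'I\le A\le mI$ together with $MI\le B\le M'I$ must be converted into a relation of the form $sA\le B\le tA$. From $A\le mI$ one gets $\tfrac{M}{m}A\le MI\le B$, so we may take $s=\tfrac{M}{m}$; from $m'I\le A$ one gets $B\le M'I\le \tfrac{M'}{m'}A$, so $t=\tfrac{M'}{m'}$. Since $m<M$ we have $s=\tfrac{M}{m}\ge 1$, hence Proposition \ref{2}(ii) applies and yields $g_v\!\bigl(\tfrac{M}{m}\bigr)A\le A\nabla_vB-A\sharp_vB\le g_v\!\bigl(\tfrac{M'}{m'}\bigr)A$. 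It then remains only to record the elementary rescaling identity $g_v\!\bigl(\tfrac{\beta}{\alpha}\bigr)=\tfrac1\alpha\bigl(\alpha\nabla_v\beta-\alpha\sharp_v\beta\bigr)$ for $\alpha,\beta>0$, which follows from $g_v(x)=(1-v)+vx-x^v$ by multiplying through by $\alpha$ and using $\alpha\cdot(\beta/\alpha)^v=\alpha^{1-v}\beta^v$. Substituting $(\alpha,\beta)=(m,M)$ and $(\alpha,\beta)=(m',M')$ gives precisely the stated lower and upper bounds.

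Case (ii) is handled in the same manner with the roles of $A$ and $B$ reversed. From $B\le mI$ and $MI\le A$ one gets $B\le mI=\tfrac{m}{M}\cdot MI\le \tfrac{m}{M}A$, so $t=\tfrac{m}{M}$; from $m'I\le B$ and $A\le M'I$ one gets $\tfrac{m'}{M'}A\le m'I\le B$, so $s=\tfrac{m'}{M'}$. Since $m<M$, here $t=\tfrac{m}{M}\le 1$, so Proposition \ref{2}(i) applies and gives $g_v\!\bigl(\tfrac{m}{M}\bigr)A\le A\nabla_vB-A\sharp_vB\le g_v\!\bigl(\tfrac{m'}{M'}\bigr)A$; applying the same identity with $(\alpha,\beta)=(M,m)$ and $(\alpha,\beta)=(M',m')$ produces the claimed inequalities.

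There is essentially no hard step: the entire content is the bookkeeping that turns the two-sided operator bounds on $A$ and $B$ into a single sandwich relation $sA\le B\le tA$ with the right constants, together with the routine rescaling identity for $g_v$. The only point requiring a little attention is checking the sign conditions ($s\ge 1$ in (i), $t\le 1$ in (ii)) that select the correct branch of Proposition \ref{2}, and both follow immediately from the strict inequality $m<M$.
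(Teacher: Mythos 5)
Your proof is correct and follows exactly the route the paper intends: the corollary is stated without proof, but it is obtained precisely by converting the order conditions into the sandwich relation $sA\le B\le tA$ with $s=\tfrac{M}{m},\ t=\tfrac{M'}{m'}$ (resp.\ $s=\tfrac{m'}{M'},\ t=\tfrac{m}{M}$), applying the matching branch of Proposition \ref{2}, and using the rescaling identity for $g_v$ — the same bookkeeping the authors carry out in the proof of Corollary \ref{cor01}.
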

\begin{remark}
It is known that for any $A,B>0$,
	\[A{{\nabla }_{v}}B\le A{{\sharp}_{v}}B\quad\text{ for }v\notin \left[ 0,1 \right].\]
Assume ${{g}_{v}}\left( x \right)$ is defined as in Theorem \ref{3}. By an elementary computation we have	
\[\left\{ \begin{array}{ll}
 g_{v}^{'}\left( x \right)>0&\text{ for }v\notin \left[ 0,1 \right]\text{ and }0<x\le 1 \\ 
g_{v}^{'}\left( x \right)<0&\text{ for }v\notin \left[ 0,1 \right]\text{ and }x>1 \\ 
\end{array} \right..\]
Now, in the same way as above we have also for any $v\notin \left[ 0,1 \right]$:
\begin{itemize}
	\item[(i)] If $0<m'I\le A\le mI\leq MI\le B\le M'I$, then
\[\frac{1}{m'}\left( m'{{\sharp}_{v}}M'-m'{{\nabla }_{v}}M' \right)A\le A{{\sharp}_{v}}B-A{{\nabla }_{v}}B\le \frac{1}{m}\left( m{{\sharp}_{v}}M-m{{\nabla }_{v}}M \right)A.\]
On account of assumptions, we also infer 
\[(m'{{\sharp}_{v}}M'-m'{{\nabla }_{v}}M')I\le A{{\sharp}_{v}}B-A{{\nabla }_{v}}B\le (m{{\sharp}_{v}}M-m{{\nabla }_{v}}M)I.\]

	\item[(ii)] If $0<m'I\le B\le mI\leq MI\le A\le M'I$, then
\[\frac{1}{M}\left( M{{\sharp}_{v}}m-M{{\nabla }_{v}}m \right)A\le A{{\sharp}_{v}}B-A{{\nabla }_{v}}B\le \frac{1}{M'}\left( M'{{\sharp}_{v}}m'-M'{{\nabla }_{v}}m' \right)A.\]
On account of assumptions, we also infer 
\[(M{{\sharp}_{v}}m-M{{\nabla }_{v}}m)I\le A{{\sharp}_{v}}B-A{{\nabla }_{v}}B\le (M'{{\sharp}_{v}}m'-M'{{\nabla }_{v}}m')I.\]
\end{itemize}
In addition, with the same assumption to Theorem \ref{3} except for $v\notin [0,1]$, we have
$$
\min\{g_v(s),g_v(t)\}A \leq A\nabla_v B-A\sharp_v B,
$$
since we have $\min\{g_v(s),g_v(t)\}\leq g_v(x)$ by $g_v''(x) \leq 0$, for $v \notin[0,1]$.
\end{remark}

\section*{Acknowledgement}
The author (S.F.) was partially supported by JSPS KAKENHI Grant Number 16K05257.

\vskip 0.4 true cm

\tiny(S. Furuichi) Department of Information Science, College of Humanities and Sciences, Nihon University, 3-25-40, Sakurajyousui, Setagaya-ku, Tokyo, 156-8550, Japan.

{\it E-mail address:} furuichi@chs.nihon-u.ac.jp

\vskip 0.4 true cm

\tiny(H.R. Moradi) Young Researchers and Elite Club, Mashhad Branch, Islamic Azad University, Mashhad, Iran

{\it E-mail address:} hrmoradi@mshdiau.ac.ir
\end{document}